\renewenvironment{pseudocode}[3][plain]
{%
 \refstepcounter{pseudocode}%
 \ifthenelse{\equal{#1}{plain}}{\setboolean{pcode@plain}{true}}{\setboolean{pcode@plain}{false}}%
 \ifthenelse{\equal{#1}{ruled}}{\setboolean{pcode@ruled}{true}}{\setboolean{pcode@ruled}{false}}%
 \ifthenelse{\equal{#1}{display}}{\setboolean{pcode@disp}{true}}{\setboolean{pcode@disp}{false}}%
 \ifthenelse{\equal{#1}{shadowbox}}{\setboolean{pcode@shad}{true}}{\setboolean{pcode@shad}{false}}%
 \ifthenelse{\equal{#1}{doublebox}}{\setboolean{pcode@dbox}{true}}{\setboolean{pcode@dbox}{false}}%
 \ifthenelse{\equal{#1}{ovalbox}}{\setboolean{pcode@obox}{true}}{\setboolean{pcode@obox}{false}}%
 \ifthenelse{\equal{#1}{Ovalbox}}{\setboolean{pcode@Obox}{true}}{\setboolean{pcode@Obox}{false}}%
 \ifthenelse{\equal{#1}{framebox}}{\setboolean{pcode@fbox}{true}}{\setboolean{pcode@fbox}{false}}%
 \setcounter{pseudonum}{0}%
 \ifthenelse{\boolean{pcode@disp}}%
 {%
  \noindent\begin{math}%
 }%
 {%
  \begin{Sbox}%
	\begin{minipage}{\pcode@width}%
	\ifthenelse{\boolean{pcode@ruled}}
  {
   \noindent\rule{\pcode@width}{1pt}\hfill\\
   {\bfseries Algorithm \thepseudocode:\pcode@tab{1}}\pcode@AF{#2}($#3$)\\
   \noindent\rule{\pcode@width}{1pt}\hfill\\[1ex]
  }
  {
  }
  \noindent\begin{math}\begin{array}{@{\pcode@tab{1}}lr@{}}%
 }{}%
}%
{%
 \ifthenelse{\boolean{pcode@disp}}%
 {%
  \end{math}
 }%
 {%
  \ifthenelse{\boolean{pcode@ruled}}
  {
   \end{array}\end{math}\\[1ex]
   \noindent\rule{\pcode@width}{1pt}\hfill
   \end{minipage}\end{Sbox}\bigskip\noindent%
  }
  {\end{array}\end{math}\end{minipage}\end{Sbox}\smallskip\noindent}%
  \ifthenelse{\boolean{pcode@plain}}{\TheSbox}{}%
  \ifthenelse{\boolean{pcode@ruled}}{\TheSbox}{}%
  \ifthenelse{\boolean{pcode@shad}}{\shadowbox{\TheSbox}}{}%
  \ifthenelse{\boolean{pcode@dbox}}{\doublebox{\TheSbox}}{}%
  \ifthenelse{\boolean{pcode@obox}}{\cornersize*{4ex}\ovalbox{\TheSbox}}{}%
  \ifthenelse{\boolean{pcode@Obox}}{\cornersize*{4ex}\Ovalbox{\TheSbox}}{}%
  \ifthenelse{\boolean{pcode@fbox}}{\fbox{\TheSbox}}{}%
	\smallskip
 }%
}%
\title{Algebraic solution of tropical polynomial optimization problems\thanks{Mathematics, 2021, 9(19), 2472, doi:10.3390/math9192472}}
\author{N. Krivulin\thanks{Faculty of Mathematics and Mechanics, Saint Petersburg State University, 28 Universitetsky Ave., St.~Petersburg, 198504, Russia, 
nkk@math.spbu.ru.}
\thanks{This work was supported in part by the Russian Foundation for Basic Research (grant No. 20-010-00145).}}
\date{}
\newtheorem{theorem}{Theorem}
\newtheorem{lemma}[theorem]{Lemma}
\newtheorem{proposition}[theorem]{Proposition}
\theoremstyle{definition}
\newtheorem{example}{Example}
\begin{document}

\maketitle

\begin{abstract}
We consider constrained optimization problems defined in the tropical algebra setting on a linearly ordered, algebraically complete (radicable) idempotent semifield (a semiring with idempotent addition and invertible multiplication). The problems are to minimize the objective functions given by tropical analogues of multivariate Puiseux polynomials, subject to box constraints on the variables. A technique for variable elimination is presented that converts the original optimization problem to a new one in which one variable is removed and the box constraint for this variable is modified. The novel approach may be thought of as an extension of the Fourier-Motzkin elimination method for systems of linear inequalities in ordered fields to the issue of polynomial optimization in ordered tropical semifields. We use this technique to develop a procedure to solve the problem in a finite number of iterations. The procedure includes two phases: backward elimination and forward substitution of variables. We describe the main steps of the procedure, discuss its computational complexity and present numerical examples.
\\

\textbf{Keywords:} tropical algebra, idempotent semifield, tropical Puiseux polynomial, constrained polynomial optimization problem, box constraint, variable elimination.
\\

\textbf{MSC (2010):} 90C24, 15A80, 90C47, 90C23
\end{abstract}

\section{Introduction}
\section{Introduction}

We consider constrained optimization problems formulated in terms of tropical mathematics~\cite{Baccelli1993Synchronization,Kolokoltsov1997Idempotent,Golan2003Semirings,Itenberg2007Tropical,Gondran2008Graphs,Maclagan2015Introduction}, where the objective functions are defined on a linearly ordered tropical semifield (a semiring with idempotent addition and invertible multiplication). The problems are to minimize tropical analogues of multivariate Puiseux polynomials, subject to box constraints on the variables. A tropical Puiseux polynomial in variables $x_{1},\ldots,x_{N}$ can be defined as a formal analogue of the polynomial in conventional algebra
$$
\sum_{p_{1},\ldots,p_{N}}a_{p_{1},\ldots,p_{N}}x_{1}^{p_{1}}\cdots x_{N}^{p_{N}},
$$
where addition and multiplication (and hence exponentiation) are interpreted in the sense of a tropical semifield, and the exponents $p_{1},\dots,p_{N}$ can be rationals.  

Tropical polynomials have been studied in a range of research contexts, from minimax optimization problems in operations research to tropical algebraic geometry. Specifically, polynomials with non-negative integer exponents over the max-plus real semifield (the max-plus algebra) were studied in~\cite{Cuninghamegreen1980Algebra,Cuninghamegreen1984Using,Zimmermann1992Optimization,Baccelli1993Synchronization,Cuninghamegreen1995Maxpolynomial,Deschutter1996Method,Gondran2008Graphs,Anaola2009Tropical,Hook2015Maxplus}, where addition is defined as taking the maximum and multiplication as arithmetic addition. Several problems are addressed, including polynomial factorization, solution of polynomial equations and polynomial optimization. Tropical polynomials are encountered in a variety of applications, from computational geometry of polyhedra~\cite{Hampe2017Tropical} to image processing~\cite{Li1992Morphological}, cryptography~\cite{Grigoriev2014Tropical,Grigoriev2019Tropical} and games~\cite{Esparza2008Approximative,Gaubert2012Tropical}.

In the framework of tropical (algebraic) geometry, tropical polynomials arise as both valuable instruments and important objects of analysis. These polynomials are frequently defined over the min-plus real semifield with integer exponents (tropical Laurent polynomials), rational exponents (tropical Puiseux polynomials) and real exponents (generalized tropical Puiseux polynomials)~\cite{Itenberg2007Tropical,Markwig2010Field,Maclagan2015Introduction,Hampe2017Tropical,Grigoriev2018Tropical}. 

For particular semifields, the tropical polynomials are known to be convex functions in the ordinary sense, such as the polynomials over max-plus semifield, which are piecewise-linear convex functions. The optimization problems with the tropical polynomials as objective functions in these semirings can be solved using existing computational algorithms of convex optimization, including the simplex and Karmarkar algorithms in linear programming, and the subgradient and interior-point algorithms in nonlinear convex programming (see, e.g.,~\cite{Schrijver1998Theory,Borwein2000Convex} for both overviews of the approaches and detailed discussions). These algorithmic techniques often give indirect solutions in the form of approximate numerical values, but cannot provide precise analytical solutions that explicitly describe all possible solutions to the issue.

The purpose of this article is to discuss tropical polynomials in the general context of an arbitrary linearly ordered idempotent semifield with well-defined rational exponents. We consider optimization problems to minimize a tropical polynomial function of many variables, subject to box constraints on the variables. These issues arise in particular when addressing minimax approximation problems, such as single-facility location problems using Chebyshev and rectilinear distances~\cite{Krivulin2016Using,Krivulin2017Using,Krivulin2020Algebraic}, decision-making problems involving rating alternatives based on pairwise comparisons~\cite{Krivulin2019Tropical,Krivulin2020Using}, and others.

We propose a technique for variable elimination that converts the original optimization problem to a new one in which one variable is eliminated, and the box constraint for this variable is adjusted. We use this technique to develop a procedure to solve the problem in a finite number of iterations. The procedure includes two stages: backward elimination and forward substitution of variables. We describe the main steps of the procedure and discuss its computational complexity.

The proposed technique resembles the variable elimination method introduced by J.~Fourier in the early XIX century in~\cite{Fourier1825Analyse} to handle systems of linear inequalities. The method has been subsequently developed by L.~Dines~\cite{Dines1919Systems} and T.~Motzkin~\cite{Motzkin1936Contribution}, and is now known as the Fourier--Motzkin elimination~\cite{Ziegler1995Lectures,Schrijver1998Theory,Khachiyan2009Fourier} for linear problems in ordered fields. In order to overcome the recognized issue of the rapid growth of computational complexity of the Fourier--Motzkin elimination, a double description method has later been proposed in~\cite{Motzkin1953Double}, which reduces the complexity of the elimination. Both the Fourier--Motzkin elimination and the double description method appeared in the context of tropical mathematics in~\cite{Allamigeon2010Tropical,Allamigeon2014Tropical}, where tropical polyhedra are investigated in the setting of the max-plus algebra.

For polynomial optimization problems defined in terms of the max-plus semifield, the proposed two-stage procedure directly leads to a variant of the Fourier--Motzkin elimination for solving a linear program. In the case of arbitrary linearly ordered tropical semifields, the new technique can be considered as an extension of the Fourier--Motzkin method to solve generalized geometrical programs in the setting of linearly ordered tropical semifields. In its initial form, the procedure has a double-exponential complexity with respect to the number of monomials in the objective function, and can be reduced to exponential complexity by detecting and removing redundancy, which arises in the problem representation during elimination steps. Since the procedure becomes very time consuming as the numbers of variables and monomials increase, it may be computationally impractical to solve problems of high dimension. However, the proposed technique can be of value not only as a theoretical tool to study new classes of tropical optimization problems, but as a practical approach for modest-sized problems, that allows obtaining both direct exact solutions using rational arithmetic and approximate numerical solutions with floating-point computations.

The research described in this article was motivated by~\cite{Krivulin2016Using,Krivulin2017Using,Krivulin2020Algebraic} and their solution of tropical optimization problems. A general scheme for the technique is proposed in~\cite{Krivulin2020Usingparameter}, which develops a variable elimination method for unconstrained discrete linear Chebyshev approximation problems in terms of traditional mathematics. In the current research, we extend the elimination method to solve tropical polynomial optimization problems in the general setting of an arbitrary linearly ordered idempotent semifield. In this case, the discrete Chebyshev approximation problems can be considered as a special instance of the tropical optimization problem. Moreover, in contrast to the solution technique in~\cite{Krivulin2020Usingparameter}, which does not take into account constraints imposed on the unknown variables, the new technique allows solving problems with box constraints on the variables. 

The paper is organized as follows. In Section~\ref{S-TATP}, we present an overview of the basic definitions and notation, including the notion of tropical Puiseux polynomials, which underlie the results obtained in the next sections. In Section~\ref{S-POP}, we formulate the multivariate polynomial optimization problem of interest, make some comments on the problem and offer a complete solution of the problem in one unknown variable. Section~\ref{S-EV} includes an elimination lemma that provides the basis for the solution procedure of the multivariate optimization problem. In Section~\ref{S-SP}, we outline the proposed solution procedure and discuss its computational complexity. Section~\ref{S-AE} presents application examples and related numerical results. We draw some conclusions in Section~\ref{S-C}.

\section{Tropical Algebra and Tropical Polynomials}
\label{S-TATP}

This section introduces the fundamental definitions and notations that will be used in the subsequent sections to formulate and solve tropical polynomial optimization problems. For more information on tropical mathematics and its applications, consult~\cite{Baccelli1993Synchronization,Kolokoltsov1997Idempotent,Golan2003Semirings,Gondran2008Graphs,Itenberg2007Tropical,Maclagan2015Introduction}.

\subsection{Idempotent Semifield}

Let $\mathbb{X}$ be a nonempty set, which is equipped with addition $\oplus$ and multiplication $\otimes$, and has distinct elements zero $\mathbb{0}$ and one $\mathbb{1}$ such that $(\mathbb{X},\mathbb{0},\oplus)$ is an idempotent commutative monoid, $(\mathbb{X}\setminus\{\mathbb{0}\},\mathbb{1},\otimes)$ is an Abelian group, and multiplication distributes over addition. Under these conditions, the algebraic system $(\mathbb{X},\mathbb{0},\mathbb{1},\oplus,\otimes)$ is usually referred to as an idempotent (tropical) semifield. 

Idempotent addition satisfies the property $x\oplus x=x$ for all $x\in\mathbb{X}$ and induces a partial order on $\mathbb{X}$ by the rule: the relation $x\leq y$ holds for $x,y\in\mathbb{X}$ if and only if $x\oplus y=y$. From this definition follows that each $x$ satisfies the inequality $x\geq\mathbb{0}$. We assume that this partial order is even a linear order on $\mathbb{X}$. 

For each nonzero $x\in\mathbb{X}$, there exists a multiplicative inverse $x^{-1}$ such that $xx^{-1}=\mathbb{1}$ (here and henceforth, the multiplication sign $\otimes$ is omitted for compactness). The power notation with integer exponents indicates iterated multiplication: $\mathbb{0}^{n}=\mathbb{0}$, $x^{0}=\mathbb{1}$, $x^{n}=x^{n-1}x$ and $x^{-n}=(x^{-1})^{n}$ for all nonzero $x$ and positive integer $n$. We assume that the equation $x^{n}=a$ has a unique solution $x$ for each $a\in\mathbb{X}$ and integer $n>0$, and thus the semifield is radicable, which allows rational exponents. Moreover, the rational powers are assumed extended (by some appropriate limiting process) to real exponents. In what follows, the power notation is understood in terms of tropical algebra.  

With respect to the order induced by idempotent addition, both addition and multiplication are monotone: the inequality $x\leq y$ yields $x\oplus z\leq y\oplus z$ and $xz\leq yz$. Furthermore, addition possesses the extremal property (the majority law) in the form of the inequalities $x\leq x\oplus y$ and $y\leq x\oplus y$. The inequality $x\oplus y\leq z$ is equivalent to the pair of inequalities $x\leq z$ and $y\leq z$. Finally, exponentiation is monotone in the sense that, for any $x,y\ne\mathbb{0}$, the inequality $x\leq y$ results in $x^{r}\geq y^{r}$ if $r<0$, and $x^{r}\leq y^{r}$ if $r\geq0$.

Examples of the tropical semifield under consideration include the following algebraic systems (which are isomorphic to each other with obvious isomorphisms):
\begin{align*}
\mathbb{R}_{\max,+}
&=
(\mathbb{R}\cup\{-\infty\},-\infty,0,\max,+),
\\
\mathbb{R}_{\min,+}
&=
(\mathbb{R}\cup\{+\infty\},+\infty,0,\min,+),
\\
\mathbb{R}_{\max}
&=
(\mathbb{R}_{+}\cup\{0\},0,1,\max,\times),
\\
\mathbb{R}_{\min}
&=
(\mathbb{R}_{+}\cup\{+\infty\},+\infty,1,\min,\times),
\end{align*}
where $\mathbb{R}$ is the set of reals, and $\mathbb{R}_{+}=\{x>0|\ x\in\mathbb{R}\}$.

The semifield $\mathbb{R}_{\max,+}$ (also known as the max-plus algebra) has the operations $\oplus$ defined as taking maximum $\max$ and $\otimes$ as arithmetic addition $+$, with their neutral elements $\mathbb{0}$ given by $-\infty$ and $\mathbb{1}$ by $0$. For each $x\in\mathbb{R}$, the inverse $x^{-1}$ corresponds to the opposite number $-x$ in the conventional algebra; the power $x^{y}$ coincides with the usual product $xy$, and thus is defined for all $x,y\in\mathbb{R}$. The order induced by idempotent addition is the natural linear order on $\mathbb{R}$.

In the semifield $\mathbb{R}_{\min}$ (the min-algebra), the operations are $\oplus=\min$ and $\otimes=\times$, and the neutral elements are $\mathbb{0}=+\infty$ and $\mathbb{1}=1$. The inversion and exponentiation have the standard interpretation, whereas the order produced by the addition $\oplus$ is opposite to the linear order on $\mathbb{R}$.

\subsection{Tropical Puiseux Polynomials}

A tropical Puiseux monomial in $N$ variables $x_{1},\ldots,x_{N}$ over $\mathbb{X}$ is a product of powers $x_{1}^{p_{1}}\cdots x_{N}^{p_{N}}$ with exponents $p_{1},\ldots,p_{N}\in\mathbb{Q}$, where $\mathbb{Q}$ is the set of rationals. A tropical Puiseux polynomial is generally defined as a tropical linear combination of monomials $x_{1}^{p_{1}}\cdots x_{N}^{p_{N}}$ with given nonzero coefficients $a_{p_{1},\ldots,p_{N}}\in\mathbb{X}$ in the form
\begin{equation*}
f(x_{1},\ldots,x_{N})
=
\bigoplus_{(p_{1},\ldots,p_{N})\in P}
a_{p_{1},\ldots,p_{N}}
x_{1}^{p_{1}}\cdots x_{N}^{p_{N}},
\qquad
x_{j}\ne\mathbb{0},
\qquad
j=1,\ldots,N;
\end{equation*}
where $(p_{1},\ldots,p_{N})$ is the vector of exponents and $P\subset\mathbb{Q}^{N}$ is a finite subset.

Since all coefficients satisfy the condition $a_{p_{1},\ldots,p_{N}}>\mathbb{0}$, and hence are tropically positive, the function $f$ can also be considered as a tropical posynomial.

To simplify further formulas, we exploit an equivalent representation using a single index to label coefficients. Suppose that the polynomial $f$ consists of $M$ monomials, each given by its vector of exponents $(p_{1},\ldots,p_{N})$. Next, we assume that the monomials are ordered according to their vectors by using an appropriate ordering scheme (e.g., the lexicographic rule), and then consecutively numbered starting from 1 to  $M$. We use the numbers $i=1,\ldots,M$ to relabel the coefficients as $a_{i}$ and the components of the vectors of exponents as $(p_{i1},\ldots,p_{iN})$. As a result, the tropical Puiseux polynomial can be written as
\begin{equation*}
f(x_{1},\ldots,x_{N})
=
\bigoplus_{i=1}^{M}
a_{i}
x_{1}^{p_{i1}}\cdots x_{N}^{p_{iN}},
\qquad
x_{j}\ne\mathbb{0},
\qquad
j=1,\ldots,N;
\end{equation*}
where $a_{i}\in\mathbb{X}$, $a_{i}\ne\mathbb{0}$, and $p_{i1},\ldots,p_{iN}\in\mathbb{Q}$ for all $i=1,\ldots,M$.

Finally, we note that, in the context of the max-plus semifield $\mathbb{R}_{\max,+}$, the polynomial is represented in terms of the usual operations as
\begin{equation*}
f(x_{1},\ldots,x_{N})
=
\max_{1\leq i\leq M}
(p_{i1}x_{1}+\cdots+p_{iN}x_{N}+a_{i})
\end{equation*} 
and thus defines a piecewise-linear convex function on $\mathbb{R}^{N}$. In the framework of $\mathbb{R}_{\min}$, the conventional form of the polynomial becomes
\begin{equation*}
f(x_{1},\ldots,x_{N})
=
\min_{1\leq i\leq M}
a_{i}
x_{1}^{p_{i1}}\cdots x_{N}^{p_{iN}},
\end{equation*} 
which specifies a nonlinear concave (upper convex) function on $\mathbb{R}_{+}^{N}$.

\section{Polynomial Optimization Problems}
\label{S-POP}

We are concerned with constrained optimization problems in the tropical algebra setting, where one needs to minimize, in terms of the order induced by idempotent addition, an objective function given by a tropical Puiseux polynomial, subject to box constraints on the unknown variables. Given parameters $p_{ij}\in\mathbb{Q}$ and $a_{i},g_{j},h_{j}\in\mathbb{X}$ such that $a_{i},h_{j}\ne\mathbb{0}$ and $g_{j}\leq h_{j}$ for all $i=1,\ldots,M$ and $j=1,\ldots,N$, the problem is to find nonzero $x_{1},\ldots,x_{N}\in\mathbb{X}$ that give
\begin{equation}
\begin{aligned}
&&
\min_{x_{1},\ldots,x_{N}}
&&&
\bigoplus_{i=1}^{M}
a_{i}x_{1}^{p_{i1}}\cdots x_{N}^{p_{iN}};
\\
&&
\text{s.t.}
&&&
g_{j}
\leq
x_{j}
\leq
h_{j},
\qquad
j=1,\ldots,N.
\end{aligned}
\label{P-minx1xNaix1pi1xNpiN-gjleqxjleqhj}
\end{equation}

Note that this problem can also be considered as a tropical analogue of a constrained geometric program.

The operator $\min$ in the formulation of problem~\eqref{P-minx1xNaix1pi1xNpiN-gjleqxjleqhj} is understood in the framework of the order on $\mathbb{X}$, and thus the conventional interpretation of the optimization objective is dependent on the particular semifield. Specifically, if the problem is given in terms of the max-plus algebra $\mathbb{R}_{\max,+}$, it is a minimization problem in the ordinary sense as well.

When considered in the framework of min-algebra $\mathbb{R}_{\min}$, this problem corresponds to an ordinary maximization problem since the objective $\min$ is treated in the sense of an order that is opposite to the natural linear order.

We observe that the polynomial optimization problems formulated in the sense of the semifields $\mathbb{R}_{\max,+}$ and $\mathbb{R}_{\min,+}$ can be represented as linear programs and then solved by appropriate computational schemes of linear programming, including the simplex and Karmarkar algorithms.

In terms of $\mathbb{R}_{\max}$ or $\mathbb{R}_{\min}$, problem~\eqref{P-minx1xNaix1pi1xNpiN-gjleqxjleqhj} becomes a nonlinear convex or concave optimization problem, which can be handled using solution techniques available in convex programming, such as subgradient and interior-point algorithms.

These algorithmic approaches, which are focused on numerical solutions, cannot guarantee a direct solution in an explicit form. Below, we apply a tropical algebraic technique to solve the problem with one unknown variable in a general setting of an arbitrary idempotent semifield. In the next sections, we extend this solution to prove an elimination lemma that provides a basis for a complete analytical solution of the multivariate polynomial optimization problems.

In the case of one-variable polynomials, an analytical solution to the problem can be obtained as follows. Consider problem~\eqref{P-minx1xNaix1pi1xNpiN-gjleqxjleqhj} with $N=1$ and represent it in a simplified form without indices that indicate the variable number, as the problem
\begin{equation}
\begin{aligned}
&&
\min_{x}
&&&
\bigoplus_{i=1}^{M}
a_{i}x^{p_{i}};
\\
&&
\text{s.t.}
&&&
g
\leq
x
\leq
h.
\end{aligned}
\label{P-minx1aix1pi1-g1leqx1leqh}
\end{equation}

The next result offers a complete direct solution to the problem.
\begin{proposition}
\label{N-minx1aix1pi1-g1leqx1leqh}
The minimum value of the objective function in problem~\eqref{P-minx1aix1pi1-g1leqx1leqh} is equal to
\begin{equation}
\mu
=
\bigoplus_{\substack{1\leq i,k\leq M\\p_{i}<0,\ p_{k}>0}}
a_{i}^{-\frac{p_{k}}{p_{i}-p_{k}}}a_{k}^{\frac{p_{i}}{p_{i}-p_{k}}}
\oplus
\bigoplus_{1\leq i\leq M}
(h^{-p_{i}}
\oplus
g^{-p_{i}})^{-1}
a_{i},
\label{E-mu1}
\end{equation}
whereas all solutions are given by the condition
\begin{equation}
\bigoplus_{\substack{1\leq i\leq M\\p_{i}<0}}
\mu^{1/p_{i}}
a_{i}^{-1/p_{i}}
\oplus
g
\leq
x
\leq
\left(
\bigoplus_{\substack{1\leq i\leq M\\p_{i}>0}}
\mu^{-1/p_{i}}
a_{i}^{1/p_{i}}
\oplus
h^{-1}
\right)^{-1},
\label{I-x1}
\end{equation}
where and thereafter the empty sums are interpreted as $\mathbb{0}$.
\end{proposition}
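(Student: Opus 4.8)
The plan is to analyze the one-variable polynomial $f(x) = \bigoplus_{i=1}^{M} a_i x^{p_i}$ on the interval $g \le x \le h$ by splitting the monomials according to the sign of the exponent $p_i$. Write $f(x) = \bigoplus_{p_i > 0} a_i x^{p_i} \oplus \bigoplus_{p_i < 0} a_i x^{p_i} \oplus \bigoplus_{p_i = 0} a_i$, and note that the third group is a constant that only raises the floor of $\mu$, so I can deal with it at the end (it is absorbed by the boundary terms when one uses $x \le h$ or $x \ge g$). The positive-exponent part is monotone increasing in $x$ and the negative-exponent part is monotone decreasing; the minimum of their tropical sum is a classical ``balancing'' phenomenon. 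The key inequality I will use repeatedly is the arithmetic-mean type bound available in any radicable idempotent semifield: for $\alpha, \beta \ge \mathbb{0}$ and $0 \le t \le \mathbb{1}$ (rational), one has $\alpha \oplus \beta \ge \alpha^{t}\beta^{1-t}$, with equality attainable by choosing the variable appropriately. This is the tropical analogue of the weighted AM--GM inequality and follows from monotonicity of exponentiation together with $\alpha \oplus \beta \ge \alpha$ and $\alpha \oplus \beta \ge \beta$.

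First I would establish the lower bound $f(x) \ge \mu$ for every feasible $x$. For each pair $i, k$ with $p_i < 0 < p_k$, I apply the extremal property $f(x) \ge a_i x^{p_i} \oplus a_k x^{p_k}$ and then the AM--GM bound with the weight $t = -p_k/(p_i - p_k) \in (0,1)$ chosen precisely so that the powers of $x$ cancel: $a_i x^{p_i} \oplus a_k x^{p_k} \ge (a_i x^{p_i})^{t} (a_k x^{p_k})^{1-t} = a_i^{t} a_k^{1-t} x^{t p_i + (1-t) p_k} = a_i^{-p_k/(p_i-p_k)} a_k^{p_i/(p_i-p_k)}$. Taking the tropical sum over all such pairs gives the first double sum in $\mu$. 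Separately, using $x \le h$ for the increasing part I get $a_k x^{p_k} \le a_k h^{p_k}$ hence $a_k \ge a_k h^{p_k} x^{-p_k} \ge$ (rearranging) a lower bound of the form $(h^{-p_k})^{-1} a_k$ on $f$; similarly $x \ge g$ and the decreasing monomials give $(g^{-p_i})^{-1} a_i$. Combining $h$ and $g$ into the single factor $(h^{-p_i} \oplus g^{-p_i})^{-1}$ yields the second sum in \eqref{E-mu1}. This establishes $f(x) \ge \mu$.

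Next I would prove the reverse: that $\mu$ is actually attained, and that the attaining set is exactly the interval \eqref{I-x1}. Here I would take a candidate $x$ in the stated interval and verify $f(x) \le \mu$ by checking each monomial separately. For $p_i < 0$, the left endpoint constraint $x \ge \mu^{1/p_i} a_i^{-1/p_i}$ rearranges (using that $1/p_i < 0$ flips the exponentiation inequality) exactly to $a_i x^{p_i} \le \mu$; for $p_i > 0$, the right endpoint constraint $x \le \big(\bigoplus_{p_k>0}\mu^{-1/p_k}a_k^{1/p_k} \oplus h^{-1}\big)^{-1} \le \mu^{-1/p_i} a_i^{1/p_i}$ rearranges to $a_i x^{p_i} \le \mu$; and for $p_i = 0$ one needs $a_i \le \mu$, which holds because the boundary term $(h^{-p_i} \oplus g^{-p_i})^{-1} a_i = (h^{0}\oplus g^{0})^{-1} a_i = \mathbb{1}^{-1} a_i = a_i$ appears in \eqref{E-mu1}. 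Thus $f(x) = \bigoplus_i a_i x^{p_i} \le \mu$, so $f(x) = \mu$. Conversely, I would check that the interval in \eqref{I-x1} is nonempty and that any $x$ outside it violates one of these rearranged inequalities, hence gives $f(x) > \mu$ — for instance an $x$ below the left endpoint makes some $a_i x^{p_i} > \mu$ for a negative $p_i$, or some $g$-term fail. One should also confirm the interval is consistent with $g \le x \le h$, which follows because the extra terms only shrink it and the double-sum part of $\mu$ guarantees the left endpoint does not exceed the right.

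The main obstacle I anticipate is bookkeeping rather than conceptual: one has to be careful that the two representations of $\mu$ — the one used for the lower bound (pairs $i,k$ plus single boundary terms) and the one implicitly used for the upper bound (endpoint formulas in \eqref{I-x1}) — are genuinely the same semifield element, and that the nonemptiness of \eqref{I-x1} is equivalent to these bounds being compatible. Verifying that ``left endpoint $\le$ right endpoint'' reduces, after clearing inverses, precisely to the pairwise terms $a_i^{-p_k/(p_i-p_k)}a_k^{p_i/(p_i-p_k)}$ being $\le \mu$ is the one computation that requires genuine care with the flipped exponent inequalities; everything else is a direct application of monotonicity, the extremal property, and the tropical AM--GM bound. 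The edge cases — no positive exponents, no negative exponents, all exponents zero — must be tracked via the empty-sum convention $\bigoplus_{\varnothing} = \mathbb{0}$, and one should note that when there are no negative (resp. positive) exponents the infimum is approached by pushing $x$ toward $g$ (resp. $h$), which is exactly what the boundary terms encode.
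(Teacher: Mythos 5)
Your overall strategy --- bounding $f(x)$ from below by pairing monomials with opposite-sign exponents via the tropical weighted AM--GM inequality plus individual boundary terms, and then verifying attainment on the interval \eqref{I-x1} monomial by monomial --- is sound and genuinely different in presentation from the paper, which instead introduces an auxiliary parameter $\lambda$, rewrites the problem as a parameterized system of inequalities, solves it for $x$, and extracts $\mu$ from the consistency condition of the resulting double inequality (a one-variable prototype of its elimination lemma). However, as written your derivation of the second sum in \eqref{E-mu1} contains a real error: you bound the increasing monomials ($p_k>0$) using $x\le h$ and claim a lower bound $(h^{-p_k})^{-1}a_k=h^{p_k}a_k$ on $f$, and the decreasing ones ($p_i<0$) using $x\ge g$ with bound $g^{p_i}a_i$. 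These claimed bounds are false in general (if all exponents are positive and $g<h$, then $f(g)=\bigoplus_k a_k g^{p_k}$ can be strictly smaller than $\bigoplus_k a_k h^{p_k}$), and the intermediate rearrangement ``$a_k\ge a_kh^{p_k}x^{-p_k}$'' also has the inequality reversed. The correct pairing is the opposite one: for $p_i>0$ use $x\ge g$ to get $a_ix^{p_i}\ge a_ig^{p_i}=(g^{-p_i})^{-1}a_i$, for $p_i<0$ use $x\le h$ to get $a_ix^{p_i}\ge a_ih^{p_i}$, and for $p_i=0$ trivially $a_i$; since $(h^{-p_i}\oplus g^{-p_i})^{-1}$ equals $h^{p_i}$, $\mathbb{1}$ or $g^{p_i}$ in these three cases (given $g\le h$), this yields exactly the second sum. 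A similar inversion slip occurs in your attainment step for $p_i>0$: the right endpoint satisfies $x\le\bigl(\mu^{-1/p_i}a_i^{1/p_i}\bigr)^{-1}=\mu^{1/p_i}a_i^{-1/p_i}$, not $x\le\mu^{-1/p_i}a_i^{1/p_i}$; the conclusion $a_ix^{p_i}\le\mu$ is correct once this is fixed.

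The point you only assert --- nonemptiness of the interval \eqref{I-x1}, i.e.\ that its left endpoint does not exceed its right endpoint --- is in fact the crux of the attainment argument and must be carried out: multiplying the left endpoint by the inverse of the right endpoint and expanding gives terms $\mu^{1/p_i-1/p_k}a_i^{-1/p_i}a_k^{1/p_k}$, $\mu^{1/p_i}a_i^{-1/p_i}h^{-1}$, $g\,\mu^{-1/p_k}a_k^{1/p_k}$ and $gh^{-1}$, each of which is $\le\mathbb{1}$ precisely because $\mu$ dominates, respectively, the pairwise terms, $h^{p_i}a_i$, $g^{p_k}a_k$, together with $g\le h$. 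You correctly identify this computation and it does go through, so the gap is one of execution rather than concept; with the $g$/$h$ pairing corrected and this verification written out, your argument is a valid alternative to the paper's elimination-style derivation, whose advantage is that it extends verbatim to the multivariate elimination lemma in the next section.
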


\begin{proof}
First, we use an auxiliary variable $\lambda$ to rewrite the problem as the following extended constrained problem with two variables:
\begin{equation*}
\begin{aligned}
&&
\min_{x,\lambda}
&&&
\lambda;
\\
&&
\text{s.t.}
&&&
\bigoplus_{i=1}^{M}
a_{i}x^{p_{i}}
\leq
\lambda,
\\
&&&&&
g
\leq
x
\leq
h.
\end{aligned}
\end{equation*}

We consider the first inequality constraint and apply properties of idempotent addition to represent it in equivalent form as the system of inequalities
\begin{equation*}
\lambda
\geq
a_{i}x^{p_{i}};
\qquad
i=1,\ldots,M.
\end{equation*}

Since both $a_{i}\ne\mathbb{0}$ for all $i$ and $x\ne\mathbb{0}$ by assumption, the variable $\lambda$ is bounded from below as $\lambda\geq a_{i}x^{p_{i}}>\mathbb{0}$.

After solving for $x$ those inequalities which have nonzero exponents of $x$, the system takes the form
\begin{align*}
x
&\geq
\lambda^{1/p_{i}}
a_{i}^{-1/p_{i}},
&&
p_{i}<0;
\\
x^{-1}
&\geq
\lambda^{-1/p_{i}}
a_{i}^{1/p_{i}},
&&
p_{i}>0;
\\
\lambda
&\geq
a_{i},
&&
p_{i}=0;
\qquad
i=1,\ldots,M.
\end{align*}

Combining the inequalities with common left-hand sides and adding the box constraint $g\leq x\leq h$ rewritten as two inequalities $x\geq g$ and $x^{-1}\geq h^{-1}$ yield
\begin{equation}
\begin{aligned}
x
&\geq
\bigoplus_{p_{i}<0}
\lambda^{1/p_{i}}
a_{i}^{-1/p_{i}}
\oplus
g,
\\
x^{-1}
&\geq
\bigoplus_{p_{i}>0}
\lambda^{-1/p_{i}}
a_{i}^{1/p_{i}}
\oplus
h^{-1},
\\
\lambda
&\geq
\bigoplus_{p_{i}=0}
a_{i}.
\end{aligned}
\label{I-x-x1-lambda}
\end{equation}

We take the first two inequalities at~\eqref{I-x-x1-lambda} to couple into the double inequality
\begin{equation}
\bigoplus_{p_{i}<0}
\lambda^{1/p_{i}}
a_{i}^{-1/p_{i}}
\oplus
g
\leq
x
\leq
\left(
\bigoplus_{p_{i}>0}
\lambda^{-1/p_{i}}
a_{i}^{1/p_{i}}
\oplus
h^{-1}
\right)^{-1}.
\label{I-leqxleq}
\end{equation}

This inequality specifies a consistent boundary condition for the unknown $x$ if and only if the inequality
\begin{equation*}
\bigoplus_{p_{i}<0}
\lambda^{1/p_{i}}
a_{i}^{-1/p_{i}}
\oplus
g
\leq
\left(
\bigoplus_{p_{i}>0}
\lambda^{-1/p_{i}}
a_{i}^{1/p_{i}}
\oplus
h^{-1}
\right)^{-1}
\end{equation*}
holds, which is equivalent to the inequality
\begin{equation*}
\left(
\bigoplus_{p_{i}<0}
\lambda^{1/p_{i}}
a_{i}^{-1/p_{i}}
\oplus
g
\right)
\left(
\bigoplus_{p_{k}>0}
\lambda^{-1/p_{k}}
a_{k}^{1/p_{k}}
\oplus
h^{-1}
\right)
\leq
\mathbb{1}.
\end{equation*}

We now solve the obtained inequality for $\lambda$. After expanding the left-hand side, we replace this inequality by the equivalent system of four inequalities
\begin{align*}
\bigoplus_{p_{i}<0,\ p_{k}>0}
\lambda^{1/p_{i}-1/p_{k}}
a_{i}^{-1/p_{i}}a_{k}^{1/p_{k}}
&\leq
\mathbb{1},
\\
h^{-1}
\bigoplus_{p_{i}<0}
\lambda^{1/p_{i}}
a_{i}^{-1/p_{i}}
&\leq
\mathbb{1},
\\
g
\bigoplus_{p_{k}>0}
\lambda^{-1/p_{k}}
a_{k}^{1/p_{k}}
&\leq
\mathbb{1},
\\
gh^{-1}
&\leq
\mathbb{1}.
\end{align*}

We note that the last inequality is equivalent to $g\leq h$, and hence is valid.

Consider the first inequality of this system, written in the form
\begin{equation*}
\bigoplus_{p_{i}<0,\ p_{k}>0}
\lambda^{-\frac{p_{i}-p_{k}}{p_{i}p_{k}}}
a_{i}^{-\frac{1}{p_{i}}}a_{k}^{\frac{1}{p_{k}}}
\leq
\mathbb{1},
\end{equation*}
and observe that the exponents of $\lambda$ in all terms satisfy the condition
\begin{equation*}
-\frac{p_{i}-p_{k}}{p_{i}p_{k}}
<0.
\end{equation*}

In a similar way as before, we represent the first inequality as a set of inequalities, one for each $i$ and $k$. We solve all inequalities in this set for $\lambda$ and then combine the results into one inequality to bound $\lambda$ from below as
\begin{equation*}
\lambda
\geq
\bigoplus_{p_{i}<0,\ p_{k}>0}
a_{i}^{-\frac{p_{k}}{p_{i}-p_{k}}}a_{k}^{\frac{p_{i}}{p_{i}-p_{k}}}.
\end{equation*}

Furthermore, application of the same solution technique to the second and third inequality of the above system yields the result 
\begin{align*}
\lambda
&\geq
\bigoplus_{p_{i}<0}
h^{p_{i}}
a_{i},
\\
\lambda
&\geq
\bigoplus_{p_{i}>0}
g^{p_{i}}
a_{i}.
\end{align*}

Let us verify that these two inequalities together with the last inequality at~\eqref{I-x-x1-lambda} are equivalent to an inequality that offers another lower bound for $\lambda$ as follows:
\begin{equation*}
\lambda
\geq
\bigoplus_{1\leq i\leq M}
(h^{-p_{i}}
\oplus
g^{-p_{i}})^{-1}
a_{i}.
\end{equation*}

Indeed, with the condition $g\leq h$, for each $i$, we obtain
\begin{equation*}
(h^{-p_{i}}
\oplus
g^{-p_{i}})^{-1}
=
\begin{cases}
h^{p_{i}},
&
\text{if $p_{i}<0$};
\\
\mathbb{1},
&
\text{if $p_{i}=0$};
\\
g^{p_{i}},
&
\text{if $p_{i}>0$}.
\end{cases}
\end{equation*}

Substitution into the above inequality allows us to split its right-hand side into three sums and then rewrite the inequality as three inequalities corresponding to the positive, zero and negative exponents $p_{i}$.

Finally, we combine both lower bounds into one bound
\begin{equation*}
\lambda
\geq
\bigoplus_{\substack{1\leq i,k\leq M\\p_{i}<0,\ p_{k}>0}}
a_{i}^{-\frac{p_{k}}{p_{i}-p_{k}}}a_{k}^{\frac{p_{i}}{p_{i}-p_{k}}}
\oplus
\bigoplus_{1\leq i\leq M}
(h^{-p_{i}}
\oplus
g^{-p_{i}})^{-1}
a_{i}.
\end{equation*}

We take the right-hand side of this inequality as the minimum of $\lambda$ in the extended problem, which is the minimum in problem~\eqref{P-minx1aix1pi1-g1leqx1leqh} as well. We denote this minimum by $\mu$ to write equality~\eqref{E-mu1}.

All solutions $x$ that achieve this minimum are given by inequality~\eqref{I-leqxleq} with $\lambda$ substituted by $\mu$, which yields~\eqref{I-x1}.
\end{proof}

Note that the number of tropical summands in the first sum at~\eqref{E-mu1} attains its maximum $\lfloor M^{2}/4\rfloor$, when there are no zero exponents $p_{i}$ and the number of positive and negative exponents are minimally different.

\section{Elimination of Variables}
\label{S-EV}

In this section, we demonstrate how the solution of the polynomial optimization problem with $N>1$ variables can be reduced to the solution of a problem of the same form, but with one unknown variable less. This result serves as the key component of a solution procedure presented below, which allows one to solve the problem in a finite number of iterations. 

We describe an algebraic transformation technique used to eliminate a variable from the objective function, and then to rearrange this function and modify the box constraint for the variable eliminated. The technique follows similar principles to the Fourier--Motzkin elimination method~\cite{Ziegler1995Lectures,Schrijver1998Theory,Khachiyan2009Fourier}, and extends this method, initially designed in the framework of ordered fields, to optimization problems in tropical semifields. 

In the same way as in Proposition~\ref{N-minx1aix1pi1-g1leqx1leqh}, we introduce an auxiliary parameter to represent the minimum value of the objective function and to replace this function with a system of parameterized inequalities. The system is solved with respect to the variable $x_{N}$, and the solution obtained is combined with the box constraint for $x_{N}$ to derive a new parameterized box constraint. The condition for the new constraint to be consistent is used to establish a lower bound for the parameter, which serves as an objective function in the reduced problem. 

The formal basis of the technique is provided by the following result.
\begin{lemma}
\label{L-minx1xNaix1pi1xNpiN-gjleqxjleqhj}
Problem~\eqref{P-minx1xNaix1pi1xNpiN-gjleqxjleqhj} is equivalent to the problem
\begin{equation}
\begin{aligned}
&&
\min_{x_{1},\ldots,x_{N-1}}
&&&
\bigoplus_{i=1}^{M^{2}+M}
b_{i}x_{1}^{q_{i1}}\cdots x_{N-1}^{q_{i,N-1}};
\\
&&
\text{\upshape s.t.}
&&&
g_{j}
\leq
x_{j}
\leq
h_{j},
\qquad
j=1,\ldots,N-1;
\end{aligned}
\label{P-minx1xN1bix1qi1xN1qiN1}
\end{equation}
together with the double inequality
\begin{equation}
\bigoplus_{i=1}^{M}
c_{i}x_{1}^{r_{i1}}\cdots x_{N-1}^{r_{i,N-1}}
\oplus
g_{N}
\leq
x_{N}
\leq
\left(
\bigoplus_{i=1}^{M}
d_{i}x_{1}^{s_{i1}}\cdots x_{N-1}^{s_{i,N-1}}
\oplus
h_{N}^{-1}
\right)^{-1},
\label{I-cixirleqxNleqdixis}
\end{equation}
where and thereafter the empty products are interpreted as $\mathbb{1}$, and for all $i,k=1,\ldots,M$ and $j=1,\ldots,N-1$, the following notation is used:
\begin{equation}
\begin{aligned}
q_{M(i-1)+k,j}
&=
\begin{cases}
-
\frac{p_{ij}p_{kN}-p_{kj}p_{iN}}{p_{iN}-p_{kN}},
&
\text{if $p_{iN}<0$ and $p_{kN}>0$};
\\
0,
&
\text{otherwise};
\end{cases}
\\
q_{M^{2}+i,j}
&=
p_{ij};
\\
r_{ij}
&=
\begin{cases}
-
p_{ij}/p_{iN},
&
\text{if $p_{iN}<0$};
\\
0,
&
\text{otherwise};
\end{cases}
\\
s_{ij}
&=
\begin{cases}
p_{ij}/p_{iN},
&
\text{if $p_{iN}>0$};
\\
0,
&
\text{otherwise};
\end{cases}
\\
b_{M(i-1)+k}
&=
\begin{cases}
a_{i}^{-\frac{p_{kN}}{p_{iN}-p_{kN}}}a_{k}^{\frac{p_{iN}}{p_{iN}-p_{kN}}},
&
\text{if $p_{iN}<0$ and $p_{kN}>0$};
\\
\mathbb{0},
&
\text{otherwise};
\end{cases}
\\
b_{M^{2}+i}
&=
(h_{N}^{-p_{iN}}
\oplus
g_{N}^{-p_{iN}})^{-1}
a_{i};
\\
c_{i}
&=
\begin{cases}
\mu^{1/p_{iN}}
a_{i}^{-1/p_{iN}},
&
\text{if $p_{iN}<0$};
\\
\mathbb{0},
&
\text{otherwise};
\end{cases}
\\
d_{i}
&=
\begin{cases}
\mu^{-1/p_{iN}}
a_{i}^{1/p_{iN}},
&
\text{if $p_{iN}>0$};
\\
\mathbb{0},
&
\text{otherwise};
\end{cases}
\end{aligned}
\label{E-qij-rij-sij-bik-ci-di}
\end{equation}
and $\mu$ is the minimum of the objective function in problem~\eqref{P-minx1xN1bix1qi1xN1qiN1}.
\end{lemma}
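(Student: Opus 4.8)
The plan is to mimic, step by step, the single-variable argument of Proposition~\ref{N-minx1aix1pi1-g1leqx1leqh}, now treating $x_{N}$ as the distinguished variable and carrying the remaining variables $x_{1},\ldots,x_{N-1}$ along as ``coefficients''. First I would introduce an auxiliary parameter $\mu$ to rewrite problem~\eqref{P-minx1xNaix1pi1xNpiN-gjleqxjleqhj} as the problem of minimizing $\mu$ subject to $\bigoplus_{i=1}^{M} a_{i}x_{1}^{p_{i1}}\cdots x_{N}^{p_{iN}}\leq\mu$ together with the box constraints. Using the extremal property of idempotent addition, the first constraint splits into the $M$ inequalities $\mu\geq a_{i}x_{1}^{p_{i1}}\cdots x_{N-1}^{p_{i,N-1}}x_{N}^{p_{iN}}$, $i=1,\ldots,M$. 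For each $i$, I would solve the corresponding inequality for $x_{N}$: when $p_{iN}<0$ it gives a lower bound $x_{N}\geq(\mu^{-1}a_{i}x_{1}^{p_{i1}}\cdots x_{N-1}^{p_{i,N-1}})^{1/p_{iN}}$, when $p_{iN}>0$ it gives an upper bound (equivalently a lower bound on $x_{N}^{-1}$), and when $p_{iN}=0$ it is a constraint $\mu\geq a_{i}x_{1}^{p_{i1}}\cdots x_{N-1}^{p_{i,N-1}}$ that does not involve $x_{N}$ at all. Here I would use monotonicity of exponentiation (direction flip for negative exponents) exactly as in the proof of the proposition.

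Next I would collect the lower bounds (including $x_{N}\geq g_{N}$) into a single expression and the upper bounds (including $x_{N}^{-1}\geq h_{N}^{-1}$) into another, obtaining the double inequality~\eqref{I-cixirleqxNleqdixis} with $c_{i}$, $d_{i}$, $r_{ij}$, $s_{ij}$ as defined in~\eqref{E-qij-rij-sij-bik-ci-di}; the coefficients $c_{i},d_{i}$ carry the factor $\mu^{\pm1/p_{iN}}$ and the $x$-exponents $r_{ij},s_{ij}$ are $\mp p_{ij}/p_{iN}$, matching~\eqref{E-qij-rij-sij-bik-ci-di}. Then, exactly as in the proposition, this double inequality is consistent if and only if the lower bound is $\le$ the upper bound, which (after multiplying through by the inverted upper bound) becomes a single inequality of the form (lower-bound expression)$\otimes$(upper-bound expression)$\leq\mathbb{1}$. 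Expanding the product of the two tropical sums yields four families of terms: (i) the ``$p_{iN}<0$ and $p_{kN}>0$'' cross terms, which after solving for $\mu$ produce the lower bound $\mu\geq a_{i}^{-p_{kN}/(p_{iN}-p_{kN})}a_{k}^{p_{iN}/(p_{iN}-p_{kN})}x_{1}^{q_{\ldots,1}}\cdots$ with $q_{M(i-1)+k,j}=-(p_{ij}p_{kN}-p_{kj}p_{iN})/(p_{iN}-p_{kN})$ — this is the algebra to check carefully; (ii) terms $h_{N}^{-1}$ times the lower-bound sum, giving $\mu\geq h_{N}^{p_{iN}}a_{i}x_{1}^{p_{i1}}\cdots$ for $p_{iN}<0$; (iii) terms $g_{N}$ times the upper-bound sum, giving $\mu\geq g_{N}^{p_{iN}}a_{i}x_{1}^{p_{i1}}\cdots$ for $p_{iN}>0$; (iv) the term $g_{N}h_{N}^{-1}\leq\mathbb{1}$, which is just $g_{N}\leq h_{N}$ and holds by hypothesis.

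Then I would merge families (ii), (iii) together with the $p_{iN}=0$ constraints $\mu\geq a_{i}x_{1}^{p_{i1}}\cdots$ into the single family $\mu\geq(h_{N}^{-p_{iN}}\oplus g_{N}^{-p_{iN}})^{-1}a_{i}x_{1}^{p_{i1}}\cdots x_{N-1}^{p_{i,N-1}}$, using the case analysis on the sign of $p_{iN}$ that computes $(h_{N}^{-p_{iN}}\oplus g_{N}^{-p_{iN}})^{-1}$ as $h_{N}^{p_{iN}}$, $\mathbb{1}$, or $g_{N}^{p_{iN}}$ under $g_{N}\le h_{N}$ — this reproduces the $b_{M^{2}+i}$ coefficients and the exponents $q_{M^{2}+i,j}=p_{ij}$. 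Family (i) gives the $b_{M(i-1)+k}$ coefficients and $q_{M(i-1)+k,j}$ exponents; the vanishing cases ($p_{iN}\geq0$ or $p_{kN}\leq0$) are absorbed by setting $b_{M(i-1)+k}=\mathbb{0}$ (an empty/zero summand), so the index set can be written uniformly as $i=1,\ldots,M^{2}+M$. All these lower bounds on $\mu$ combine, by the equivalence of $x\oplus y\leq z$ with $\{x\leq z,\,y\leq z\}$, into the single requirement that $\mu$ be at least the objective function of problem~\eqref{P-minx1xN1bix1qi1xN1qiN1}; conversely any admissible $(x_{1},\ldots,x_{N-1})$ and this choice of $\mu$ admit an $x_{N}$ satisfying~\eqref{I-cixirleqxNleqdixis}, so the minimum of the reduced problem equals $\mu$. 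I would close by noting that the whole argument is reversible at each step (each ``if and only if'' really is one), which gives the claimed equivalence. The main obstacle is purely computational rather than conceptual: correctly carrying the $x_{1},\ldots,x_{N-1}$ monomials through the ``solve for $x_{N}$'' and ``multiply the two sums'' steps and verifying that the resulting exponents are exactly $q_{M(i-1)+k,j}=-(p_{ij}p_{kN}-p_{kj}p_{iN})/(p_{iN}-p_{kN})$ and coefficients exactly $a_{i}^{-p_{kN}/(p_{iN}-p_{kN})}a_{k}^{p_{iN}/(p_{iN}-p_{kN})}$, together with bookkeeping the empty-product and zero-summand conventions so that the index range $M^{2}+M$ is honest.
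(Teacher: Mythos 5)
Your proposal follows essentially the same route as the paper's proof: introduce an auxiliary parameter, split the objective bound into $M$ inequalities, solve each for $x_{N}$ according to the sign of $p_{iN}$, couple the resulting bounds with the box constraint into a double inequality, impose its consistency as a product bounded by $\mathbb{1}$, expand into the four families of terms, and merge the lower bounds (together with the $p_{iN}=0$ terms via the case analysis of $(h_{N}^{-p_{iN}}\oplus g_{N}^{-p_{iN}})^{-1}$) into the reduced objective, finally substituting the minimum back to obtain~\eqref{I-cixirleqxNleqdixis}. The only blemish is a slip in your intermediate formula for the $p_{iN}<0$ case, where the bound should read $x_{N}\geq(\mu^{-1}a_{i}x_{1}^{p_{i1}}\cdots x_{N-1}^{p_{i,N-1}})^{-1/p_{iN}}=\mu^{1/p_{iN}}a_{i}^{-1/p_{iN}}x_{1}^{-p_{i1}/p_{iN}}\cdots x_{N-1}^{-p_{i,N-1}/p_{iN}}$ (outer exponent $-1/p_{iN}$, not $1/p_{iN}$), which your subsequent identification of the coefficients and exponents with~\eqref{E-qij-rij-sij-bik-ci-di} already corrects.
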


\begin{proof}
With an auxiliary parameter $\lambda$, problem~\eqref{P-minx1xNaix1pi1xNpiN-gjleqxjleqhj} is represented as
\begin{equation*}
\begin{aligned}
&&
\min_{x_{1},\ldots,x_{N},\lambda}
&&&
\lambda;
\\
&&
\text{s.t.}
&&&
\bigoplus_{i=1}^{M}
a_{i}x_{1}^{p_{i1}}\cdots x_{N}^{p_{iN}}
\leq
\lambda,
\\
&&&&&
g_{j}
\leq
x_{j}
\leq
h_{j},
\qquad
j=1,\ldots,N.
\end{aligned}
\end{equation*}

We replace the first inequality constraint by the system of inequalities
\begin{equation*}
\lambda
\geq
a_{i}x_{1}^{p_{i1}}\cdots x_{N}^{p_{iN}};
\qquad
i=1,\ldots,M;
\end{equation*}
and then solve each inequality for $x_{N}$ to obtain the inequalities
\begin{align*}
x_{N}
&\geq
\lambda^{1/p_{iN}}
a_{i}^{-1/p_{iN}}x_{1}^{-p_{i1}/p_{iN}}\cdots x_{N-1}^{-p_{i,N-1}/p_{iN}},
&&
p_{iN}<0;
\\
x_{N}^{-1}
&\geq
\lambda^{-1/p_{iN}}
a_{i}^{1/p_{iN}}x_{1}^{p_{i1}/p_{iN}}\cdots x_{N-1}^{p_{i,N-1}/p_{iN}},
&&
p_{iN}>0;
\\
\lambda
&\geq
a_{i}x_{1}^{p_{i1}}\cdots x_{N-1}^{p_{i,N-1}},
&&
p_{iN}=0;
\qquad
i=1,\ldots,M.
\end{align*}

Combining the inequalities obtained and the box constraint for $x_{N}$ results in the~system
\begin{equation}
\begin{aligned}
x_{N}
&\geq
\bigoplus_{\substack{1\leq i\leq M\\p_{iN}<0}}
\lambda^{1/p_{iN}}
a_{i}^{-1/p_{iN}}x_{1}^{-p_{i1}/p_{iN}}\cdots x_{N-1}^{-p_{i,N-1}/p_{iN}}
\oplus
g_{N},
\\
x_{N}^{-1}
&\geq
\bigoplus_{\substack{1\leq i\leq M\\p_{iN}>0}}
\lambda^{-1/p_{iN}}
a_{i}^{1/p_{iN}}x_{1}^{p_{i1}/p_{iN}}\cdots x_{N-1}^{p_{i,N-1}/p_{iN}}
\oplus
h_{N}^{-1},
\\
\lambda
&\geq
\bigoplus_{\substack{1\leq i\leq M\\p_{iN}=0}}
a_{i}x_{1}^{p_{i1}}\cdots x_{N-1}^{p_{i,N-1}},
\end{aligned}.
\label{I-xN-xN1-lambda}
\end{equation}

The first two inequalities in~\eqref{I-xN-xN1-lambda} lead to the double inequality
\begin{multline}
\bigoplus_{\substack{1\leq i\leq M\\p_{iN}<0}}
\lambda^{1/p_{iN}}
a_{i}^{-1/p_{iN}}x_{1}^{-p_{i1}/p_{iN}}\cdots x_{N-1}^{-p_{i,N-1}/p_{iN}}
\oplus
g_{N}
\leq
x_{N}
\\\leq
\left(
\bigoplus_{\substack{1\leq i\leq M\\p_{iN}>0}}
\lambda^{-1/p_{iN}}
a_{i}^{1/p_{iN}}x_{1}^{p_{i1}/p_{iN}}\cdots x_{N-1}^{p_{i,N-1}/p_{iN}}
\oplus
h_{N}^{-1}
\right)^{-1}.
\label{I-leqxNleq}
\end{multline}

This double inequality implies that its left part is less or equal to the right, which is equivalent to the inequality 
\begin{multline*}
\left(
\bigoplus_{\substack{1\leq i\leq M\\p_{iN}<0}}
\lambda^{1/p_{iN}}
a_{i}^{-1/p_{iN}}x_{1}^{-p_{i1}/p_{iN}}\cdots x_{N-1}^{-p_{i,N-1}/p_{iN}}
\oplus
g_{N}
\right)
\\\otimes
\left(
\bigoplus_{\substack{1\leq k\leq M\\p_{kN}>0}}
\lambda^{-1/p_{kN}}
a_{k}^{1/p_{kN}}x_{1}^{p_{k1}/p_{kN}}\cdots x_{N-1}^{p_{k,N-1}/p_{kN}}
\oplus
h_{N}^{-1}
\right)
\leq
\mathbb{1}.
\end{multline*}

By expanding double brackets, we split the inequality into four inequalities 
\begin{multline*}
\bigoplus_{\substack{1\leq i,k\leq M\\p_{iN}<0,\ p_{kN}>0}}
\lambda^{1/p_{iN}-1/p_{kN}}
a_{i}^{-1/p_{iN}}a_{k}^{1/p_{kN}}
\\
\begin{aligned}
\otimes
x_{1}^{-p_{i1}/p_{iN}+p_{k1}/p_{kN}}\cdots x_{N-1}^{-p_{i,N-1}/p_{iN}+p_{k,N-1}/p_{kN}}
&\leq
\mathbb{1},
\\
h_{N}^{-1}
\bigoplus_{\substack{1\leq i\leq M\\p_{iN}<0}}
\lambda^{1/p_{iN}}
a_{i}^{-1/p_{iN}}x_{1}^{-p_{i1}/p_{iN}}\cdots x_{N-1}^{-p_{i,N-1}/p_{iN}}
&\leq
\mathbb{1},
\\
g_{N}
\bigoplus_{\substack{1\leq k\leq M\\p_{kN}>0}}
\lambda^{-1/p_{kN}}
a_{k}^{1/p_{kN}}x_{1}^{p_{k1}/p_{kN}}\cdots x_{N-1}^{p_{k,N-1}/p_{kN}}
&\leq
\mathbb{1},
\\
g_{N}h_{N}^{-1}
&\leq
\mathbb{1},
\end{aligned}
\end{multline*}
where the last inequality trivially holds by assumption.

The solution of the first inequality for $\lambda$ yields a lower bound for $\lambda$, given by
\begin{equation*}
\lambda
\geq
\bigoplus_{\substack{1\leq i,k\leq M\\p_{iN}<0,\ p_{kN}>0}}
a_{i}^{-\frac{p_{kN}}{p_{iN}-p_{kN}}}a_{k}^{\frac{p_{iN}}{p_{iN}-p_{kN}}}
x_{1}^{-\frac{p_{i1}p_{kN}-p_{k1}p_{iN}}{p_{iN}-p_{kN}}}\cdots x_{N-1}^{-\frac{p_{i,N-1}p_{kN}-p_{k,N-1}p_{iN}}{p_{iN}-p_{kN}}}.
\end{equation*}

Next, we solve the second and third inequality for $\lambda$ to write the inequalities
\begin{align*}
\lambda
&\geq
\bigoplus_{\substack{1\leq i\leq M\\p_{iN}<0}}
h_{N}^{p_{iN}}
a_{i}x_{1}^{p_{i1}}\cdots x_{N-1}^{p_{i,N-1}},
\\
\lambda
&\geq
\bigoplus_{\substack{1\leq i\leq M\\p_{iN}>0}}
g_{N}^{p_{iN}}
a_{i}x_{1}^{p_{i1}}\cdots x_{N-1}^{p_{i,N-1}},
\end{align*}
and then combine them with the last inequality at~\eqref{I-xN-xN1-lambda} to derive another bound
\begin{equation*}
\lambda
\geq
\bigoplus_{1\leq i\leq M}
(h_{N}^{-p_{iN}}
\oplus
g_{N}^{-p_{iN}})^{-1}
a_{i}x_{1}^{p_{i1}}\cdots x_{N-1}^{p_{i,N-1}}.
\end{equation*}

By coupling both lower bounds, we obtain 
\begin{multline*}
\lambda
\geq
\bigoplus_{\substack{1\leq i,k\leq M\\p_{iN}<0,\ p_{kN}>0}}
a_{i}^{-\frac{p_{kN}}{p_{iN}-p_{kN}}}a_{k}^{\frac{p_{iN}}{p_{iN}-p_{kN}}}
x_{1}^{-\frac{p_{i1}p_{kN}-p_{k1}p_{iN}}{p_{iN}-p_{kN}}}\cdots x_{N-1}^{-\frac{p_{i,N-1}p_{kN}-p_{k,N-1}p_{iN}}{p_{iN}-p_{kN}}}
\\\oplus
\bigoplus_{1\leq i\leq M}
(h_{N}^{-p_{iN}}
\oplus
g_{N}^{-p_{iN}})^{-1}
a_{i}x_{1}^{p_{i1}}\cdots x_{N-1}^{p_{i,N-1}}.
\end{multline*}

With the notation given by~\eqref{E-qij-rij-sij-bik-ci-di}, the last inequality becomes
\begin{equation*}
\lambda
\geq
\bigoplus_{i=1}^{M^{2}+M}
b_{i}x_{1}^{q_{i1}}\cdots x_{N-1}^{q_{i,N-1}}.
\end{equation*}

It is easy to see that a sufficient part of the coefficients $b_{i}$ for $i=1,\ldots,M^{2}$ are equal to $\mathbb{0}$. Indeed, it follows from the definition of $b_{i}$ at~\eqref{E-qij-rij-sij-bik-ci-di} that the maximum number of nonzero coefficients among the first $M^{2}$ is bounded from above by $M^{2}/4$. As a result, the actual number of monomials in the polynomial on the right-hand side of the above inequality is not greater than $M^{2}/4+M$.

It remains to see that finding the minimum value of $\lambda$ in this inequality under the box-constraints $g_{j}\leq x_{j}\leq h_{j}$ for all $j=1,\ldots,N-1$ is equivalent to the solution of problem~\eqref{P-minx1xN1bix1qi1xN1qiN1}.

At the same time, with the minimum of $\lambda$ denoted by $\mu$ and the notation at~\eqref{E-qij-rij-sij-bik-ci-di}, the box constraint for $x_{N}$ in the form of~\eqref{I-leqxNleq} reduces to~\eqref{I-cixirleqxNleqdixis}.
\end{proof}

To conclude this section, we note that the proof of the lemma remains valid if we allow the tropical Puiseux polynomials to have real exponents, and thus extend the result to cover optimization problems with generalized tropical Puiseux polynomials.

\section{Solution Procedure}
\label{S-SP}

We now describe a solution procedure that applies the elimination lemma to find all solutions of problem~\eqref{P-minx1xNaix1pi1xNpiN-gjleqxjleqhj} in a finite number of steps. As in the process of the Fourier--Motzkin elimination, the procedure includes two phases: (i) backward elimination and (ii) forward substitution of variables.

\subsection{Backward Elimination and Forward Substitution}
The backward elimination phase consists of successive steps of eliminating the variables from $x_{N}$ to $x_{1}$. Upon completion of this phase, the objective function of the problem reduces to a constant that is equal to the minimum value of this function. As another result, both lower and upper bounds for each variable are derived in the form of polynomial functions of other variables and the minimum value of the objective function. 

The forward substitution phase involves steps of evaluating the lower and upper bounds for each variable from $x_{1}$ to $x_{N}$. The phase starts with calculating the bounds for the variable $x_{1}$, which are dependent only on the minimum of the objective function. Substitution of a value of $x_{1}$, which satisfies these bounds, yields the bounds for $x_{2}$, and so on. The last step produces lower and upper bounds for the variable $x_{N}$, and thereby completes the solution.

To describe the procedure more formally, first consider the variable elimination phase. For each $n=N,N-1,\ldots,1$, the procedure gradually rearranges problem~\eqref{P-minx1xNaix1pi1xNpiN-gjleqxjleqhj} into the~problems
\begin{equation*}
\begin{aligned}
&&
\min_{x_{1},\ldots,x_{n-1}}
&&&
\bigoplus_{i=1}^{M_{n-1}}
a_{i}^{(n-1)}x_{1}^{p_{i1}^{(n-1)}}\cdots x_{n-1}^{p_{i,n-1}^{(n-1)}};
\\
&&
\text{\upshape s.t.}
&&&
g_{j}
\leq
x_{j}
\leq
h_{j},
\qquad
j=1,\ldots,n-1;
\end{aligned}
\end{equation*}
where $M_{n-1}=M_{n}^{2}+M_{n}$ with $M_{N}=M$, and for all $i,k=1,\ldots,M_{n}$ and $j=1,\ldots,n-1$, the following recurrent relations are used:
\begin{align*}
p_{M_{n}(i-1)+k,j}^{(n-1)}
&=
\begin{cases}
-
\frac{p_{ij}^{(n)}p_{kn}^{(n)}-p_{kj}^{(n)}p_{in}^{(n)}}{p_{in}^{(n)}-p_{kn}^{(n)}},
&
\text{if $p_{in}^{(n)}<0$ and $p_{kn}^{(n)}>0$};
\\
0,
&
\text{otherwise};
\end{cases}
\\
p_{M_{n}^{2}+i,j}^{(n-1)}
&=
p_{ij}^{(n)};
\\
a_{M_{n}(i-1)+k}^{(n-1)}
&=
\begin{cases}
(a_{i}^{(n)})^{-\frac{p_{kn}^{(n)}}{p_{in}^{(n)}-p_{kn}^{(n)}}}(a_{k}^{(n)})^{\frac{p_{in}^{(n)}}{p_{in}^{(n)}-p_{kn}^{(n)}}},
&
\text{if $p_{in}^{(n)}<0$ and $p_{kn}^{(n)}>0$};
\\
\mathbb{0},
&
\text{otherwise};
\end{cases}
\\
a_{M_{n}^{2}+i}^{(n-1)}
&=
(h_{n}^{-p_{in}^{(n)}}
\oplus
g_{n}^{-p_{in}^{(n)}})^{-1}
a_{i}^{(n)}
\end{align*}
together with the conditions $a_{i}^{(N)}=a_{i}$ and $p_{ij}^{(N)}=p_{ij}$.

The phase ends at $n=1$, when the objective function becomes the constant
\begin{equation*}
\mu
=
\bigoplus_{i=1}^{M_{0}}
a_{i}^{(0)},
\end{equation*} 
which specifies the minimum value of the objective function in problem~\eqref{P-minx1xNaix1pi1xNpiN-gjleqxjleqhj}.

Furthermore, the procedure constructs a system of box constraints given for each $n=N,N-1,\ldots,1$ by the double inequalities
\begin{equation*}
\bigoplus_{i=1}^{M_{n}}
c_{i}^{(n-1)}x_{1}^{r_{i1}^{(n-1)}}\cdots x_{n-1}^{r_{i,n-1}^{(n-1)}}
\oplus
g_{n}
\leq
x_{n}
\leq
\left(
\bigoplus_{i=1}^{M_{n}}
d_{i}^{(n-1)}x_{1}^{s_{i1}^{(n-1)}}\cdots x_{n-1}^{s_{i,n-1}^{(n-1)}}
\oplus
h_{n}^{-1}
\right)^{-1},
\end{equation*}
using for all $i=1,\ldots,M_{n}$ and $j=1,\ldots,n-1$, the following recurrent relations:
\begin{align*}
r_{ij}^{(n-1)}
&=
\begin{cases}
-
p_{ij}^{(n)}/p_{in}^{(n)},
&
\text{if $p_{in}^{(n)}<0$};
\\
0,
&
\text{otherwise};
\end{cases}
\\
s_{ij}^{(n-1)}
&=
\begin{cases}
p_{ij}^{(n)}/p_{in}^{(n)},
&
\text{if $p_{in}^{(n)}>0$};
\\
0,
&
\text{otherwise};
\end{cases}
\\
c_{i}^{(n-1)}
&=
\begin{cases}
\mu^{1/p_{in}^{(n)}}
(a_{i}^{(n)})^{-1/p_{in}^{(n)}},
&
\text{if $p_{in}^{(n)}<0$};
\\
\mathbb{0},
&
\text{otherwise};
\end{cases}
\\
d_{i}^{(n-1)}
&=
\begin{cases}
\mu^{-1/p_{in}^{(n)}}
(a_{i}^{(n)})^{1/p_{in}^{(n)}},
&
\text{if $p_{in}^{(n)}>0$};
\\
\mathbb{0},
&
\text{otherwise}.
\end{cases}
\end{align*}

The forward substitution phase exploits the box constraints to calculate, one by one, the lower and upper bounds for the unknown variables $x_{n}$ for all $n=1,\ldots,N$. The phase begins with the derivation of the box constraint for $x_{1}$, which is given by constant bounds~as
\begin{equation*}
\bigoplus_{i=1}^{M_{1}}
c_{i}^{(0)}
\oplus
g_{1}
\leq
x_{1}
\leq
\left(
\bigoplus_{i=1}^{M_{1}}
d_{i}^{(0)}
\oplus
h_{1}^{-1}
\right)^{-1}.
\end{equation*}

As the next step, substitution of a value for $x_{1}$, which satisfies the inequality, into the box constraint for $x_{2}$ yields a box constraint with constant bounds for $x_{2}$. In the same way, further steps make it possible to fix the values of the other variables. 

The core of the procedure is recalculating the coefficients and exponents in the monomials, which compose the polynomial functions that represent the objectives and constraints during the elimination process. We combine the coefficients into the vectors 
\begin{equation*}
\bm{a}_{n}
=
(a_{i}^{(n)}),
\qquad
\bm{c}_{n}
=
(c_{i}^{(n)}),
\qquad
\bm{d}_{n}
=
(d_{i}^{(n)}),
\end{equation*}
and the exponents into the matrices
\begin{equation*}
\bm{P}_{n}
=
(p_{ij}^{(n)}),
\qquad
\bm{R}_{n}
=
(r_{ij}^{(n)}),
\qquad
\bm{S}_{n}
=
(s_{ij}^{(n)}).
\end{equation*}

We now summarize the transformation of these arrays in the form of Algorithm~\ref{A-Forward-Backward}.
\begin{algorithm}
\caption{Forward-Backward}
\label{A-Forward-Backward}
\setstretch{1.4}
\renewcommand{\COMMENT}[1]{\mbox{\bfseries comment:}\ \mbox{#1}}
\begin{pseudocode}{\hl{Forward-Backward}}{\bm{P},\bm{a},\bm{g},\bm{h},M,N}\label{Forward-Backward}
\PROCEDURE{Objectives}{\bm{P}_{n-1},\bm{a}_{n-1},\bm{P}_{n},\bm{a}_{n},M_{n},n}
\COMMENT{Construct $\bm{P}_{n-1}$ and $\bm{a}_{n-1}$ from $\bm{P}_{n}$ and $\bm{a}_{n}$}
\\
\FOR i \GETS 1 \TO M_{n} \DO
\BEGIN
	\FOR k \GETS 1 \TO M_{n} \DO
		\BEGIN
			\IF p_{in}^{(n)}<0 \AND p_{kn}^{(n)}>0 \THEN
				\BEGIN
					\FOR j \GETS 1 \TO n-1 \ADO p_{M_{n}(i-1)+k,j}^{(n-1)} \GETS -\frac{p_{ij}^{(n)}p_{kn}^{(n)}-p_{kj}^{(n)}p_{in}^{(n)}}{p_{in}^{(n)}-p_{kn}^{(n)}}
					\\
					a_{M_{n}(i-1)+k}^{(n-1)} \GETS (a_{i}^{(n)})^{-\frac{p_{kn}^{(n)}}{p_{in}^{(n)}-p_{kn}^{(n)}}}(a_{k}^{(n)})^{\frac{p_{in}^{(n)}}{p_{in}^{(n)}-p_{kn}^{(n)}}}
				\END
			\ELSE
				\BEGIN
					\FOR j \GETS 1 \TO n-1 \ADO p_{M_{n}(i-1)+k,j}^{(n-1)} \GETS 0
					\\
					a_{M_{n}(i-1)+k}^{(n-1)} \GETS \mathbb{0}
				\END
		\END
		\\
		\FOR j \GETS 1 \TO n-1 \ADO p_{M_{n}^{2}+i,j}^{(n-1)} \GETS p_{ij}^{(n)}
	\\
	a_{M_{n}^{2}+i}^{(n-1)} \GETS (h_{n}^{-p_{in}^{(n)}}\oplus g_{n}^{-p_{in}^{(n)}})^{-1}a_{i}^{(n)}
\END
\ENDPROCEDURE
\PROCEDURE{Constraints}{\bm{R}_{n-1},\bm{S}_{n-1},\bm{c}_{n-1},\bm{d}_{n-1},\bm{P}_{n},\bm{a}_{n},M_{n},n,\mu}
\COMMENT{Construct $\bm{R}_{n-1}$, $\bm{S}_{n-1}$, $\bm{c}_{n-1}$ and $\bm{d}_{n-1}$  from $\bm{P}_{n}$, $\bm{a}_{n}$ and $\mu$}
\\
\FOR i \GETS 1 \TO M_{n} \DO
	\IF p_{in}^{(n)}<0 \CTHEN
		\BEGIN
			\FOR j \GETS 1 \TO n-1 \ADO r_{ij}^{(n-1)} \GETS p_{ij}^{(n)}/p_{in}^{(n)}
			\\
			c_{i}^{(n-1)} \GETS \mu^{1/p_{in}^{(n)}}(a_{i}^{(n)})^{-1/p_{in}^{(n)}}
		\END
	\ELSEIF p_{in}^{(n)}>0 \CTHEN
		\BEGIN
			\FOR j \GETS 1 \TO n-1 \ADO s_{ij}^{(n-1)} \GETS p_{ij}^{(n)}/p_{in}^{(n)}
			\\
			d_{i}^{(n-1)} \GETS \mu^{-1/p_{in}^{(n)}}(a_{i}^{(n)})^{1/p_{in}^{(n)}}
		\END
\ENDPROCEDURE
\MAIN
\COMMENT{Construct arrays for forward elimination / backward substitution}
\\
\GLOBAL \bm{g},\bm{h}
\\
\bm{P}_{N} \GETS \bm{P},\ \bm{a}_{N} \GETS \bm{a},\ M_{n} \GETS M
\\
\FOR n \GETS N \DOWNTO 1 \ADO
	\BEGIN
		\CALL{Objectives}{\bm{P}_{n-1},\bm{a}_{n-1},\bm{P}_{n},\bm{a}_{n},M_{n},n}
		\\
		M_{n-1} \GETS M_{n}^{2}+M_{n}
	\END
	\\
	\mu \GETS \displaystyle{\bigoplus_{1\leq i\leq M_{0}}}a_{i}^{(0)}
\\	
\FOR n \GETS N \DOWNTO 1 \ADO \CALL{Constraints}{\bm{R}_{n-1},\bm{S}_{n-1},\bm{c}_{n-1},\bm{d}_{n-1},\bm{P}_{n},\bm{a}_{n},M_{n},n,\mu}	
\ENDMAIN	
\end{pseudocode}
\end{algorithm}

In conclusion, we observe that the procedure can be applied to solve those polynomial optimization problems, where the powers may have real exponents. In the case when the coefficients, bounds and powers in the problem are given by rational numbers, the procedure allows one to obtain exact solutions using rational arithmetic with an appropriate symbolic computation software.

\subsection{Computational Complexity}

Let us discuss the computational complexity of the solution procedure described above. Since the procedure shares a common core with the Fourier--Motzkin elimination for linear inequalities, it has similar computational performance and requirements (see, e.g.,~\cite{Ziegler1995Lectures,Schrijver1998Theory,Khachiyan2009Fourier} for complexity results for the Fourier--Motzkin elimination). First note that the most computationally intensive part of the procedure is the construction of new polynomials at each step of the backward elimination, which involves the calculation of the coefficient and exponents for each monomial. Therefore, as a rough measure of the time and space complexity, one can consider the number of monomials in all polynomial objective functions that appeared in the elimination process. As the number of monomials in the objective functions, in going from $n$ variables to $n-1$, formally increases as $M_{n-1}=M_{n}^{2}+M_{n}$, it has a quadratic growth rate, even though the actual number of new monomials is, in fact, not greater than $M_{n}^{2}/4+M_{n}$. 

As a result, the overall number of monomials in the objective functions obtained at $N$ elimination steps can be estimated as $O(M^{2^{N}})$ (see also~\cite{Krivulin2020Usingparameter}). This estimate shows polynomial growth with respect to $M$ and double exponential with $N$, which leads to excessively high computational requirements even for moderate numbers $M$ and $N$.

The rapid growth of the number of monomials in polynomials constructed at each step can be somewhat compensated for by decreasing the number of redundant monomials in the polynomials. The redundant monomials do not affect the value of the polynomial and can be removed to reduce the computational complexity of the procedure. An efficient reduction scheme to decrease the double-exponential growth in complexity of the Fourier--Motzkin elimination is known as the double description method, which eliminates redundancy to lower the complexity to exponential (see, e.g.,~\cite{Ziegler1995Lectures,Schrijver1998Theory,Khachiyan2009Fourier} for further details and references). Due to a close correspondence between the Fourier--Motzkin elimination and the proposed procedure, one can extend the double description method to decrease the order of computational complexity of the procedure. We do not focus here on the details of the adaptation of the double description method, but only comment on its applicability to the procedure under development and suggest the adaptation of the method for further~research.

\section{Application Examples}
\label{S-AE}

As an example of application of the proposed technique, consider a discrete linear Chebyshev approximation problem, which finds wide application including the solution of overdetermined systems of linear equations in computational algebra, the least maximum absolute deviation estimation in statistics and others. The problem is formulated in terms of conventional mathematics as follows (see, e.g.,~\cite{Krivulin2020Usingparameter}). Given $X_{ij},Y_{i},g_{j},h_{j}\in\mathbb{R}$ for all $i=1,\ldots,K$ and $j=1,\ldots,N$, the problem is to find the unknown parameters $\theta_{j}\in\mathbb{R}$ for all $j=1,\ldots,N$ that yield
\begin{equation}
\begin{aligned}
&&
\min_{\theta_{1},\ldots,\theta_{N}}
&&&
\max_{1\leq i\leq K}
\left|
\sum_{j=1}^{N}
X_{ij}\theta_{j}-Y_{i}
\right|;
\\
&&
\text{\upshape s.t.}
&&&
g_{j}
\leq
\theta_{j}
\leq
h_{j},
\qquad
j=1,\ldots,N.
\end{aligned}
\label{P-mintheta1thetaN}
\end{equation}

We observe that one can rearrange this approximation problem into a linear program and then numerically solve this program by using, for example, the simplex algorithm with exponential time or the Karmarkar algorithm with polynomial time.

Let us verify that problem~\eqref{P-mintheta1thetaN} can be represented in the form of the polynomial optimization problem at~\eqref{P-minx1xNaix1pi1xNpiN-gjleqxjleqhj}. First, we consider the expression under the maximum sign and rewrite it as
\begin{equation*}
\left|
\sum_{j=1}^{N}
X_{ij}\theta_{j}-Y_{i}
\right|
=
\max(Y_{i}-X_{i1}\theta_{1}-\cdots-X_{iN}\theta_{N},
-Y_{i}+X_{i1}\theta_{1}+\cdots+X_{iN}\theta_{N}).
\end{equation*}

Furthermore, we note that in terms of the max-plus algebra (the semifield $\mathbb{R}_{\max,+}$), the maximum on the right-hand side becomes the tropical sum of two monomials
\begin{equation*}
Y_{i}\theta_{1}^{-X_{i1}}\cdots\theta_{N}^{-X_{iN}}
\oplus
Y_{i}^{-1}\theta_{1}^{X_{i1}}\cdots\theta_{N}^{X_{iN}}.
\end{equation*}

To represent the objective function in~\eqref{P-mintheta1thetaN}, which is defined as the maximum of the above tropical sums, we put $M=2K$ and then change variables for all $i=1,\ldots,M$ and $j=1,\ldots,N$ by setting  
\begin{equation*}
x_{j}=\theta_{j},
\qquad
p_{ij}
=
\begin{cases}
-X_{ij},
&
\text{if $i\leq K$};
\\
X_{i-K,j},
&
\text{if $i>K$};
\end{cases}
\qquad
a_{i}
=
\begin{cases}
Y_{i},
&
\text{if $i\leq K$};
\\
-Y_{i-K},
&
\text{if $i>K$}.
\end{cases}
\end{equation*}

With the variables $g_{j}$ and $h_{j}$ left unchanged, we rewrite problem~\eqref{P-mintheta1thetaN} in terms of $\mathbb{R}_{\max,+}$ by using the new variables, which gives a problem in the form of~\eqref{P-minx1xNaix1pi1xNpiN-gjleqxjleqhj}. Then, the application of the computational procedure described above allows the Chebyshev approximation problem to be solved in a finite number of steps.

We now present numerical results of the exact solution of example problems of the Chebyshev approximation. We implemented a software code developed to perform all steps of the procedure with MATLAB rel.~R2021b by using symbolic computations in rational arithmetic. The numerical experiments were run on a desktop computer equipped with a 3.40 GHz Intel Xeon E3-1231 v3 CPU, 32GB DDR3 RAM and 64-bit Windows 10 Enterprise OS.

\begin{example}
We start with the solution of a linear Chebyshev approximation problem where one needs to fit three unknown parameters $\theta_{1},\theta_{2},\theta_{3}$ to achieve
\begin{equation*}
\begin{aligned}
&&
\min_{\theta_{1},\theta_{2},\theta_{3}}
&&&
\max(
|3\theta_{1}-\theta_{2}+2\theta_{3}-2|,
|\theta_{1}-2\theta_{2}+\theta_{3}-1|,
|2\theta_{1}+3\theta_{2}-\theta_{3}+1|,
|4\theta_{2}-\theta_{3}|
);
\\
&&
\text{\upshape s.t.}
&&&
0
\leq
\theta_{1}
\leq
1,
\quad
0
\leq
\theta_{2}
\leq
1,
\quad
0
\leq
\theta_{3}
\leq
1.
\end{aligned}
\end{equation*}

After changing the variables in this problem and rewriting in terms of the semifield $\mathbb{R}_{\max,+}$, we arrive at the tropical polynomial optimization problem with $M=8$ and $N=3$ to find
\begin{equation*}
\begin{aligned}
&&
\min_{x_{1},x_{2},x_{3}}
&&&
2^{-1}x_{1}^{3}x_{2}^{-1}x_{3}^{2}
\oplus
2x_{1}^{-3}x_{2}x_{3}^{-2}
\oplus
1^{-1}x_{1}x_{2}^{-2}x_{3}
\oplus
1x_{1}^{-1}x_{2}^{2}x_{3}^{-1}
\\
&&&&&
\vphantom{\min_{x_{1},x_{2},x_{3}}}
\phantom{2^{-1}x_{1}^{3}x_{2}^{-1}x_{3}^{2}\oplus2x_{1}^{-3}x_{2}x_{3}^{-2}}
\oplus
1x_{1}^{-2}x_{2}^{-3}x_{3}^{-1}
\oplus
1^{-1}x_{1}^{2}x_{2}^{3}x_{3}
\oplus
x_{2}^{4}x_{3}^{-1}
\oplus
x_{2}^{-4}x_{3}^{1};
\\
&&
\text{\upshape s.t.}
&&&
0
\leq
x_{1}
\leq
1,
\quad
0
\leq
x_{2}
\leq
1,
\quad
0
\leq
x_{3}
\leq
1.
\end{aligned}
\end{equation*}

The application of the solution procedure results, in the setting of $\mathbb{R}_{\max,+}$, in the minimum of the objective function $\mu=3^{1/7}$ and a single solution in the form $x_{1}=0$, $x_{2}=1^{1/7}$, $x_{3}=1$. In terms of the initial approximation problem, these results correspond to the minimum approximation error equal to $3/7$ and the parameter estimates given by
\begin{equation*}
\theta_{1}
=0,
\qquad
\theta_{2}
=1/7,
\qquad
\theta_{3}
=1.
\end{equation*}

To obtain this exact rational solution, the procedure takes about 26~sec of computer time.
\end{example}

\begin{example}
We now consider a Chebyshev approximation problem to find
\begin{equation*}
\begin{aligned}
&&
\min_{\theta_{1},\theta_{2},\theta_{3}}
&&&
\max(
|3\theta_{1}-\theta_{2}+2\theta_{3}+2|,
|\theta_{1}+2\theta_{2}-2\theta_{3}-1|,
|2\theta_{1}-3\theta_{2}+\theta_{3}+1|,
|2\theta_{2}-\theta_{3}|,
\\
&&&&&
\vphantom{\min_{\theta_{1},\theta_{2},\theta_{3}}}
\phantom{\max(\ }
|\theta_{1}+2\theta_{2}-\theta_{3}+1|,
|3\theta_{1}+\theta_{2}-1|,
|\theta_{1}+\theta_{2}-\theta_{3}+2|,
|\theta_{1}+\theta_{2}+2\theta_{3}|,
\\
&&&&&
\vphantom{\min_{\theta_{1},\theta_{2},\theta_{3}}}
\phantom{\max(|\theta_{1}+2\theta_{2}-\theta_{3}+1|,|3\theta_{1}+\theta_{2}-1|,\ }
|3\theta_{2}+\theta_{3}+1|,
|2\theta_{1}+\theta_{2}+3|
);
\\
&&
\text{\upshape s.t.}
&&&
-1/4
\leq
\theta_{1}
\leq
1/4,
\quad
-1/4
\leq
\theta_{2}
\leq
1/4,
\quad
-1/4
\leq
\theta_{3}
\leq
1/4.
\end{aligned}
\end{equation*}

A direct translation into the language of max-plus algebra leads to a tropical optimization problem with $M=20$ and $N=3$ in the form
\begin{equation*}
\begin{aligned}
&&
\min_{x_{1},x_{2},x_{3}}
&&&
2x_{1}^{3}x_{2}^{-1}x_{3}^{2}
\oplus
2^{-1}x_{1}^{-3}x_{2}x_{3}^{-2}
\oplus
1^{-1}x_{1}x_{2}^{2}x_{3}^{-2}
\oplus
1x_{1}^{-1}x_{2}^{-2}x_{3}^{2}
\\
&&&&&
\vphantom{\min_{x_{1},x_{2},x_{3}}}
\oplus
1x_{1}^{2}x_{2}^{-3}x_{3}
\oplus
1^{-1}x_{1}^{-2}x_{2}^{3}x_{3}^{-1}
\oplus
x_{2}^{2}x_{3}^{-1}
\oplus
x_{2}^{-2}x_{3}^{1}
\oplus
1x_{1}x_{2}^{2}x_{3}^{-1}
\oplus
1^{-1}x_{1}^{-1}x_{2}^{-2}x_{3}
\\
&&&&&
\vphantom{\min_{x_{1},x_{2},x_{3}}}
\oplus
1^{-1}x_{1}^{3}x_{2}
\oplus
1x_{1}^{-3}x_{2}^{-1}
\oplus
2x_{1}x_{2}x_{3}^{-1}
\oplus
2^{-1}x_{1}^{-1}x_{2}^{-1}x_{3}
\oplus
x_{1}x_{2}x_{3}^{2}
\oplus
x_{1}^{-1}x_{2}^{-1}x_{3}^{-2}
\\
&&&&&
\vphantom{\min_{x_{1},x_{2},x_{3}}}
\phantom{\oplus1^{-1}x_{1}^{3}x_{2}\oplus1x_{1}^{-3}x_{2}^{-1}}
\oplus
1x_{2}^{3}x_{3}
\oplus
1^{-1}x_{2}^{-3}x_{3}^{-1}
\oplus
3x_{1}^{2}x_{2}
\oplus
3^{-1}x_{1}^{-2}x_{2}^{-1};
\\
&&
\text{\upshape s.t.}
&&&
1^{-1/4}
\leq
x_{1}
\leq
1^{1/4},
\quad
1^{-1/4}
\leq
x_{2}
\leq
1^{1/4},
\quad
1^{-1/4}
\leq
x_{3}
\leq
1^{1/4}.
\end{aligned}
\end{equation*}

Solving the problem yields the minimum $\mu=13^{1/8}$ attained at $x_{1}=1^{-1/4}$, $x_{2}=1^{1/8}$, $x_{3}=1^{1/4}$. Turning back to the Chebyshev approximation problem in the conventional algebra setting, we obtain the minimum approximation error $13/8$ and the parameter estimates 
\begin{equation*}
\theta_{1}
=-1/4,
\qquad
\theta_{2}
=1/8,
\qquad
\theta_{3}
=1/4.
\end{equation*}

The solution of this problem using symbolic computations in rational arithmetic requires about 107~min of computer time. The sharp increase in the execution time of this task compared to the task in the first example reflects the double-exponential growth of the number of monomials in the objective functions obtained in the variable elimination steps, and thus indicates the importance of further research on the improvement of the efficiency of the method.
\end{example}

\section{Conclusions}
\label{S-C}

We considered optimization problems, where the objective function to be minimized is defined in terms of tropical (idempotent) algebra as a multivariate polynomial with rational exponents (a tropical Puiseux polynomial), subjected to box constraints. We have proposed a solution procedure that uses variable elimination to solve the problem in a finite number of iterations. The procedure involves two phases: forward elimination and backward substitution of variables, and can be considered as an extension of the Fourier--Motzkin elimination method for systems of linear inequalities in ordered fields to solving polynomial optimization problems in ordered tropical semifields.

The proposed procedure offers a means for both symbolic computations to obtain exact solutions using rational arithmetic and numerical computations to find approximate solutions using floating-point calculations. When solving real-world problems represented in terms of tropical polynomial optimization, the procedure can serve to supplement and complement existing solutions based on conventional mathematics.

The procedure is scalable in the sense that it potentially allows for solving polynomial optimization problems of any size by using the same computational algorithm and formulas. However, the computational complexity of the procedure, as for the Fourier--Motzkin elimination, grows very fast with increasing of the number of monomials in the objective function. Therefore, an extension of the double description method to improve the computational complexity of the procedure is of prime interest for future research. Further development of the procedure to take into account other types of constraints presents another promising line of investigation.

\section*{Acknowledgments}
The author is very grateful to three anonymous referees for their valuable comments and suggestions.

\bibliographystyle{abbrvurl}
\bibliography{Algebraic_solution_of_tropical_polynomial_optimization_problems}

\end{document}